\date{Draft version --- \today}
\newcommand{\indic}[1]{\mathbf{1}_{\{#1\}}}
\newtheorem{theorem}{Theorem}
\newtheorem{lemma}[theorem]{Lemma}
\newtheorem{coro}[theorem]{Corollary}
\newtheorem{prop}[theorem]{Proposition}
\theoremstyle{definition}
\newtheorem{rmk}[theorem]{Remark}
\newcommand{\eps}{\varepsilon}
\newcommand{\E}{\mathbb{E}}
\newcommand{\La}{\Lambda}
\newcommand{\N} {\mathbb{N}}
\def\mn{\medskip\noindent}
\def\th{^{\text{th}}}
\def\cG{\mathcal G}
\def\cH{\mathcal H}
\def\cK{\mathcal K}
\def\cP{\mathcal P}
\def\PP{\mathbb{P}}
\def\bd{\partial \mathbb{T}}
\begin{document}

\title{Kingman's coalescent and Brownian motion.}

\author{Julien Berestycki$^1$ and Nathana\"{e}l Berestycki$^2$}

\maketitle

\begin{abstract}
We describe a simple construction of Kingman's coalescent in terms of a Brownian excursion. This construction is closely related to, and sheds some new light on, earlier work by Aldous \cite{aldouscond} and Warren \cite{warren}. Our approach also yields some new results: for instance, we obtain the full multifractal spectrum of Kingman's coalescent. This complements earlier work on Beta-coalescents by the authors and Schweinsberg \cite{bbs2}. Surprisingly, the thick part of the spectrum is \emph{not} obtained by taking the limit as $\alpha \to 2$ in the result for Beta-coalescents mentioned above. Other analogies and differences between the case of Beta-coalescents and Kingman's coalescent are discussed.
\end{abstract}

\vfill
\mn 1.  \small{\texttt{julien.berestycki@upmc.fr}. Universit\'e Paris VI - Pierre et Marie Curie, Laboratoire de Probabilit\'es et Mod\`eles Al\'eatoires.}

\mn 2. \small{\texttt{N.Berestycki@statslab.cam.ac.uk}. University of Cambridge, DPMMS, Statistical Laboratory.}

\newpage

\section{Introduction and results}

Let $(\Pi_t,t \ge 0)$ be Kingman's coalescent. This is a Markov process taking its values in the set $\cP$ of partitions of $\mathbb{N}=\{1,\ldots\}$, such that initially $\Pi(0)$ is the trivial partition composed exclusively of singletons, and such that each pair of blocks merges at rate 1. Kingman's coalescent was introduced in 1982 by Kingman in his seminal paper \cite{king82}. A fascinating mathematical object in its own right, Kingman's coalescent is also a cornerstone of mathematical population genetics, serving as the basic model for genealogies in a well-mixed, selectively neutral population.

In this paper we provide a new, simple construction of Kingman's coalescent in terms of a Brownian excursion. We apply this construction to the study of some fine properties of this process, paying particular attention to its behaviour near the beginning of time, when microscopic clusters coalesce into a finite number of macroscopic ones. This phenomenon is known as \emph{coming down from infinity}, and we are able to describe the precise multifractal spectrum corresponding to clusters of atypical sizes at small times. Our construction is closely related to (and, in some sense, almost implicit in) earlier work by Aldous \cite{aldouscond}, Warren \cite{warren} and others (see section \ref{S:discussion} below). However, there are important differences, which will also be discussed in Section \ref{S:discussion} (for instance, the aforementioned application could not have been deduced from these works). This paper complements the picture developed in \cite{bbs2} and \cite{bbl2} on the relation between coalescents, continuum random trees, and Fleming-Viot type particle systems.

We now describe our construction of Kingman's coalescent. Let $(B_t,t\ge 0)$ be an excursion of Brownian motion conditioned to hit level 1. That is, let $\nu$ denote It\^o's excursion measure on the space of continuous excursions $
\Omega^*:= \bigcup_{\zeta>0} \Omega_\zeta,
$
where
$$
\Omega_\zeta:=\{f:[0,\zeta] \to \mathbb{R} \text{ continuous, } f(x)=0 \iff x \in \{0, \zeta\}\}.
$$
Let then $(B_t,0 \le t \le \zeta)$ be a realization of $\nu(\cdot | \sup_{s>0} B_s \ge 1)$. (With a slight abuse of notation, we may consider $B$ to be a function defined on $[0,\infty)$ by declaring $B(s)=0$ for all $s \ge \zeta$.) Let $\{L(t,x)\}_{t\ge 0, x \ge 0}$ denote a jointly continuous version of the local-time process of $B$, and define $$
Z_x := L(\zeta, x), \ \ x \ge 0.
$$
Thus $Z_x$ is the total local time accumulated at level $x$ by the excursion $(B_s, s \ge 0)$. Define a $\cP$-valued process $(\Pi_u, 0 \le u \le 1)$ as follows. Consider the set $\{\eps_i\}_{i=1}^\infty$  of excursions of $B$ above level 1, ordered according to their height: that is, $$\sup_{s>0} \eps_1(s) > \sup_{s>0} \eps_2(s) > \ldots $$ Now, fix $0<u <1$, and consider the set of excursions $\{e_k\}_{1 \le k \le N}$ above level $u$ reaching level $1$, where $N=N(u)$ is the number of such excursions. Note that for every $i \ge 1$, $\eps_i$ belongs to exactly one excursion $e_k$ for some $1 \le k \le N$, and let $\phi_u(i) =k \in \{1, \ldots, N\}$ be this index. Then define $\Pi_u$ by declaring that for every $i,j \ge 1$, $i$ and $j$ are in the same block of $\Pi_u$ if and only if $\phi_u(i)=\phi_u(j)$, that is, if and only if
$\eps_i$ and $\eps_j$ fall into the same excursion $e_k$ for some $ k \le N$. Our main result states that $\Pi_u$ is in fact a time-change of Kingman's coalescent.

\begin{figure}
 \begin{center}
\leavevmode
\input{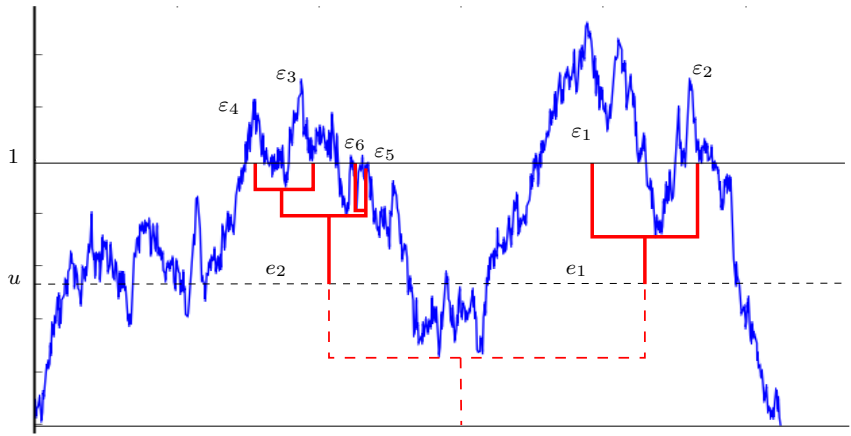}
\caption{\ Construction in Theorem 1. In this picture, we have: $\Pi_u=(\{1,2\},\{3,4,5,6\})$.}
\end{center}
\end{figure}

\begin{theorem} \label{T1} The process $(\Pi_{U(t)}, t \ge 0)$ has the same law as Kingman's coalescent, where for all $t>0$,
\begin{equation}
U(t) = \sup\left\{s>0: \int_s^1 \frac4{Z_u}du >t\right\}.
\end{equation}
\end{theorem}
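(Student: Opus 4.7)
The plan is to identify $(\Pi_u)_{u\downarrow 0}$ as an exchangeable Markov coalescent on partitions of $\N$ whose infinitesimal merger rate, after the time-change $U$, matches Kingman's coalescent rate of $1$ per pair of blocks; Kingman's own characterization of his coalescent then closes the argument. Exchangeability comes for free from the Poisson structure: conditional on $Z_v$, the excursions of $B$ above level $v$ form a Poisson point process parametrized by local-time-at-$v$, and conditional on the number $N(v)=m$ of those reaching height $1$, their local-time-at-$v$ positions are $m$ i.i.d.\ uniforms in $[0,Z_v]$; the labels $i\in\N$ (each corresponding to an excursion $\eps_i$ above level $1$) are assigned to these $m$ sub-excursions jointly exchangeably, so $\Pi_v$ is an exchangeable random partition of $\N$ for every $v$.

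The crux is to compute the merger rate using It\^o's excursion theory at level $u$. Between two consecutive reaching-$1$ sub-excursions above $u$ (an \emph{inner gap}), the path of $B$ dips below $u$ via a Poisson collection of sub-excursions whose depth tail is $1/(2d)$ per unit local time at $u$. So given that the inner gap has local-time-at-$u$ length $\tau$,
\[
\PP\bigl(\min_{\text{gap}} B > u-d \bigm| \tau\bigr)=\exp\bigl(-\tau/(2d)\bigr).
\]
A merger in $\Pi_u$ across the level window $[u-du,u]$ corresponds to an inner gap whose infimum lies in $(u-du,u)$. Given $N(u)=m$ and $Z_u$, an inner gap has marginal density $(m/Z_u)(1-\tau/Z_u)^{m-1}$ (scaled Dirichlet spacings), which takes value $m/Z_u$ at $\tau=0$, so summing over the $m-1$ inner gaps and letting $du\to 0$,
\[
\E\bigl[\,\#\{\text{mergers in }[u-du,u]\}\bigm|Z_u,N(u)=m\bigr]\;\sim\;(m-1)\cdot 2\,du\cdot\frac{m}{Z_u}\;=\;\binom{m}{2}\cdot\frac{4}{Z_u}\,du.
\]
Under $dt=(4/Z_u)\,|du|$ this is precisely $\binom{m}{2}$ per unit $t$; and by exchangeability of inner gaps and of labels, the coalescing pair of blocks is uniform, matching exactly the rates of Kingman's coalescent.

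The main obstacle is promoting this heuristic rate computation into an honest infinitesimal-generator identity: namely, that conditionally on the past, the times of successive mergers (in $t$) form an inhomogeneous Poisson process of rate $\binom{N(U(t))}{2}$ with uniformly chosen pairs. This requires (i) the strong Markov property of $B$ at level crossings, yielding conditional independence of sub-excursions across distinct inner gaps; (ii) verifying that mergers are almost surely pairwise, since the infima of $B$ over distinct sub-intervals have a.s.\ distinct values; and (iii) showing that outer-gap contributions and near-simultaneous events are of lower order as $du\to 0$. Granting these, exchangeability together with the block-number transition rates identifies $(\Pi_{U(t)})$ as Kingman's coalescent through its standard characterization, and consistency under restriction to initial segments $\{1,\dots,n\}$ finishes the proof on $\N$.
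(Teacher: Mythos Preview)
Your proposal is correct and rests on the same ingredients as the paper's proof: It\^o excursion theory at level $u$, the Poisson structure of sub-excursions below $u$ with depth intensity $dh/(2h^2)$, and the fact that conditional on $Z_u$ and $N(u)=m$ the local-time positions of the $m$ reaching-$1$ excursions are i.i.d.\ uniform on $[0,Z_u]$. The bookkeeping in the rate computation differs, however. The paper fixes an \emph{arbitrary} pair $(e_j,e_k)$---not necessarily adjacent in local time---notes that they coalesce by level $u-\delta$ iff no sub-excursion in the local-time interval $[\ell_j,\ell_k]$ has depth $\ge\delta$, and computes directly
\[
\PP(A_{j,k}(\delta)\mid \cG_u)=\E\bigl[\exp(-Z_u|U-V|/(2\delta))\bigr]=\frac{4\delta}{Z_u}+o(\delta)
\]
for $U,V$ i.i.d.\ uniform on $[0,1]$. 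This yields the per-pair Kingman rate immediately and bypasses your separate exchangeability/symmetry step. Your route via adjacent inner gaps and the spacing density at $\tau=0$ is equally valid and perhaps more intuitive as a count of coalescence events, but it only delivers the \emph{total} rate $\binom{m}{2}\cdot 4/Z_u$ and then needs the additional argument that the merging pair of blocks is uniform; the paper's per-pair computation gets this for free. Both proofs leave the passage from infinitesimal rates to a genuine generator identification at the same informal level.
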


\begin{rmk} \mn
\begin{enumerate}

\item As the reader has surely guessed, the ordering of the excursions $(\eps_i)_{i=1}^\infty$ by their height is not crucial to this result, but is in the spirit of our use of the Donnelly-Kurtz lookdown approach (see section \ref{S:proofs} and \cite{bbs2}).


\item The time-change $\{U(t), t \ge 0\}$ satisfies the following properties: $U(0)=1$, $\lim_{t \to \infty}U(t)=0$, and $U$ is continuous and monotone decreasing.

\end{enumerate}
\end{rmk}

\medskip As promised at the beginning of this introduction, we now give some applications of Theorem \ref{T1} to the study of the small-time behaviour of Kingman's coalescent. Let $(\kappa_t, t \ge 0)$ be Kingman's coalescent, and let
\begin{equation}\label{freqd}
F(t)= \lim_{n \to \infty} \frac1n \# \{1 \le i \le n: \text{ $i$ is in the same block as 1 in $\kappa_t$}\}.
\end{equation}
$F(t)$ is the frequency of the block containing 1 at time $t$, and the existence of the almost sure limit in (\ref{freqd}) (for all times simultaneously) follows from general theory on coalescent processes and exchangeable partitions: see, e.g., Proposition 2.8 in \cite{bertoin}. In the same manner, one may define $F_i(t)$ for all $i \ge 1$ to be the asymptotic frequency of the block containing $i$ at time $t$, thus $F(t)=F_1(t)$ in (\ref{freqd}). The first corollary gives us the behaviour of the typical block size near time zero.
While this result is well known, our proof is new. Along the way we also provide an alternative path to a result of Aldous concerning the asymptotic of the number of blocks with a given size.

\begin{coro} \label{C:typ size}
As $ t \to 0$,  $$\frac{2F(t)}t \overset{d}\longrightarrow  E+E'$$
where $\overset{d}\longrightarrow  $ stands for convergence in distribution, and where $E$ and $E'$ are two independent exponential variables with parameter 1.
\end{coro}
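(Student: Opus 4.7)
The plan is to translate the question via Theorem~\ref{T1} into a statement about the Brownian excursion $B$. With $u=U(t)$ and $\delta=1-u$, the block of $\Pi_u$ containing $1$ consists of all $i$ with $\varepsilon_i$ in the same excursion $e^*$ of $B$ above $u$ as $\varepsilon_1$. Since $\varepsilon_1$ contains the global maximum of $B$, the distinguished $e^*$ is the excursion above $u$ containing that maximum. Enumerate the excursions of $B$ above $u$ reaching level $1$ as $e_1,\dots,e_N$, and write $L_j:=L_{e_j}(1)$, so $\sum_j L_j=Z_1$. Because the $\varepsilon_i$'s inside $e_j$ form a Poisson point process of intensity $L_j\mu^+$ (It\^o's excursion measure), a law of large numbers applied to the top $n$ heights gives $F(t)=L^*/Z_1$ with $L^*:=L_{e^*}(1)$; by Poisson superposition the tallest $\varepsilon_i$ falls in $e_j$ with probability $L_j/Z_1$, so $L^*$ is a size-biased pick from $L_1,\dots,L_N$.

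Next I would compute the law of each $L_j$. Within $e_j$, once the process reaches $1$ it evolves as a Brownian motion until returning to $u=1-\delta$; the local time at $1$ accumulated during this period is exponential of mean $2\delta$ (downward excursions of depth $>\delta$ arrive at rate $1/(2\delta)$ per unit local time at $1$, a normalization consistent with the factor $4$ in Theorem~\ref{T1}). Thus, conditionally on $N$, the $L_j$ are i.i.d.\ exponentials with mean $2\delta$, while $N$ itself is Poisson with mean $Z_u/(2\delta)\to\infty$. A size-biased pick from many i.i.d.\ exponentials converges in law to the size-biased exponential, which is the sum of two independent copies; hence
\[
\frac{L^*}{2\delta}\;\overset{d}{\longrightarrow}\;E+E'\qquad\text{as }\delta\to 0.
\]

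To conclude I would use the time change: joint continuity of $L$ yields $Z_v\to Z_1$ as $v\uparrow 1$, so $1-U(t)\sim (Z_1/4)\,t$, and consequently
\[
\frac{2F(t)}{t}\;=\;\frac{L^*}{2\delta}(1+o(1))\;\overset{d}{\longrightarrow}\;E+E'.
\]
The step I expect to require the most care is the size-biased convergence in the presence of the random sample size and the random time change. Conditionally on $Z_1$ the argument is transparent; the delicate point is verifying that it survives averaging over $Z_1$ and over the fluctuations of $\delta$ about $(Z_1/4)t$. Since the limit $E+E'$ does not depend on $Z_1$, this ought to reduce to a uniform-integrability / dominated-convergence argument, but the joint asymptotics of $(N,\delta,L^*)$ will need to be handled with some care.
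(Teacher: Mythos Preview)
Your approach is correct and closely parallels the paper's, but it is organized differently and is, for the purpose of the corollary alone, more direct. Both proofs use Theorem~\ref{T1} to translate the problem to the excursion picture, both use that the local times $L_j$ at level $1$ carried by the excursions $e_j$ are i.i.d.\ exponentials with mean $2\delta$ (conditionally on $N$), and both use the asymptotic $\delta=1-U(t)\sim Z_1 t/4$ coming from the continuity of $Z$.

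The difference is in where the size-biasing enters. The paper first proves an almost sure law-of-large-numbers for the \emph{empirical} distribution of the $L_j$'s (Lemma~\ref{L:exc}), transfers it via the time change to the block sizes (recovering Aldous's estimate $\tfrac{t}{2}K(t,tx/2)\to e^{-x}$), deduces that a \emph{uniformly} chosen block satisfies $2B(t)/t\overset{d}{\to}E$, and only then size-biases to get $F(t)$. You instead identify $F(t)=L^*/Z_1$ with $L^*$ a size-biased pick from the $L_j$'s from the outset (using that the local-time label of the tallest $\varepsilon_i$ is uniform on $[0,Z_1]$ and independent of the below-level-$1$ information determining the $L_j$'s), and then take the limit directly. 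Your route is shorter; the paper's route yields Aldous's result as a byproduct.

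Two small points to make precise when you write it up: (i) the independence you invoke between the label of $\varepsilon_1$ and the family $(L_j)$ requires that you enumerate the $e_j$'s in an order measurable with respect to the trajectory below level $1$ (e.g.\ time order), since ordering by height couples the $e_j$'s to their above-level-$1$ behaviour; (ii) the convergence of a size-biased pick from $N$ i.i.d.\ exponentials to the size-biased exponential as $N\to\infty$ is a straightforward dominated-convergence argument, as you anticipate, and the randomness of $\delta$ is handled by the a.s.\ relation $\delta\sim Z_1 t/4$ together with the fact that the limit law does not depend on $Z_1$.
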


Our second application is, to the best of our knowledge, new. It concerns the existence and the number of blocks (in the sense of Hausdorff dimensions to be specified below) with atypical sizes as $t \to 0$, that is, blocks of size of order $t^\gamma$ with $\gamma \neq 1$. It turns out that for $\gamma<1$ (i.e., for anomalously large blocks) we have to look at a more precise, logarithmic, scale as there are almost surely no blocks whose size will be $t^\gamma$ for small $t$. In particular, the sizes of the smallest and largest blocks at small time $t>0$ are identified.

\medskip For this result, the framework introduced by Evans \cite{evans} is very convenient. Consider a random metric space $(S,d)$, defined as follows. Define a distance $d$ on $\N$ by declaring that for every $i,j \ge 1$, $d(i, j)$ is the time at which the integers $i$ and $j$ coalesce, and let $S$ be the the completion of $\N$ with respect to $d$.
It can be shown that to every ball of radius $t>0$ say, corresponds a unique block of the partition $\kappa_t$. The space $S$ is thus naturally endowed with a measure $\eta$ such that for every $x \in S$ and for every $t>0$, $\eta(x,t):=\eta(B(x,t))$ is the asymptotic frequency of the block of $\kappa_t$ associated with $B(x,t)$.
In this setting, the question mentioned above becomes: are there points $x \in S$ such that $\eta(B(x,t))$ is approximately $t^\gamma $ as $t \to 0$, and if so, what is their Hausdorff dimension? Define for $\gamma > 1$
\begin{equation}\label{small}
S_{\text{thin}}(\gamma)=
 \left\{x\in S:
\limsup_{t\to 0}\frac{\log(\eta(x,t))}{\log t} = \gamma \right\}.
\end{equation}
This set corresponds to points of $S$ with atypically small $\eta(x,t)$. For thick points, consider for all $\theta \ge 0$,
\begin{equation}\label{big}
S_{\text{thick}}(\theta)=  \left\{x\in S: \limsup_{t\to
0}\frac{\eta (x,t)}{t |\log t|} = \theta \right\}.
\end{equation}

\begin{theorem}\label{T:MF spectra} \nopagebreak[4]
\begin{enumerate}
\item[] \nopagebreak[4] \item[1.]
If $0 \le \theta \le 1$ then
$$
\dim S_{\text{\em thick}}(\theta)=1-\theta, \ a.s.
$$
When $\theta =1$, $S_{\text{\em thick}}(\theta)\neq \emptyset $ almost surely, but if $\theta>1$ then $S_{\text{\em thick}}(\theta)$ is empty almost surely.

\item[2.] If $1< \gamma \le
2$ then
$$
\dim S_{\text{\em thin}}(\gamma)=\frac{2}{\gamma}-1, \ a.s.
$$
If $\gamma > 2$ then
$S_{\text{\em thin}}(\gamma)=\emptyset$ a.s. but $S_{\text{\em thin}}(2)\neq
\emptyset$ almost surely.
\end{enumerate}
\end{theorem}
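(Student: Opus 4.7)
The plan is to use Theorem~\ref{T1} to translate the multifractal question for $\eta$ into one about atypical local-time accumulations of the Brownian excursion $B$ near the top level~1.

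First I would set up a dictionary identifying $(S, d, \eta)$ with the level-1 trace of $B$: for $v \in [0, Z_1]$ set $\sigma_v = \inf\{s \ge 0 : L(s, 1) > v\}$, so that the map $v \mapsto \sigma_v$ realizes $[0, Z_1]$ as a dense subset of $S$, with $\eta$ the pushforward of normalized Lebesgue measure on $[0, Z_1]$, and the coalescent distance given by $d(\sigma_v, \sigma_{v'}) = U^{-1}(\min_{s \in [\sigma_v, \sigma_{v'}]} B_s)$. In this parametrization, for $t$ small, $\eta(B(\sigma_v, t)) = L_1(e)/Z_1$, where $e$ is the excursion of $B$ above level $U(t)$ containing $\sigma_v$ and $L_1(e)$ its local time at level~1. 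Thick (resp.\ thin) points correspond to those $v$ whose containing excursion above level $U(t)$ carries atypically large (resp.\ small) local time at level~1 as $t \to 0$.

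Second, I would derive the tail distribution of $L_1(e)/(1-u)$ for $u$ close to~1, together with the asymptotic $1 - U(t) \asymp t$. The excursions of $B$ above level $u$ reaching level~1 form, conditionally on $Z_u$, a Poisson point process whose marks $L_1(e_k)$ are, after rescaling by $1-u$, exponentially distributed; this is read off from a Williams-type decomposition of each such sub-excursion into two (time-reversed) Bessel-3 half-excursions glued at the top. Combining with the Ray-Knight description of $u \mapsto Z_u$ near $u = 1$ as a squared Bessel process of dimension~4 started at the origin, one obtains the precise power-law scalings for the expected number of $\theta$-thick (resp.\ $\gamma$-thin) blocks at coalescent scale $t$; these scalings are the input for the Hausdorff dimension computations and produce the exponents $1 - \theta$ and $2/\gamma - 1$.

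Finally, the Hausdorff dimensions follow by standard multifractal analysis. The upper bounds come from covering $S_{\text{thick}}(\theta)$ (resp.\ $S_{\text{thin}}(\gamma)$) at each dyadic coalescent scale $t_n = 2^{-n}$ by the thick (resp.\ thin) blocks at that scale and applying a first-moment / Borel--Cantelli argument. For the matching lower bounds, I would construct a Frostman random measure on each exceptional set by size-biasing the Brownian excursion so that the typical block becomes $\theta$-thick (resp.\ $\gamma$-thin) at each dyadic scale, and verify the energy estimate for Frostman's lemma. I expect the main obstacle to lie in this last step for the thick points: one must estimate $E[\eta(\sigma_v, t)\,\eta(\sigma_{v'}, t)]$ for nearby $v, v'$, which via the tree structure of $B$ reduces to a two-sided Williams decomposition at the meeting level $\min_{s \in [\sigma_v, \sigma_{v'}]} B_s$, and requires sharp control of the resulting correlations. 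The endpoint cases $\theta = 1$ and $\gamma = 2$ need extra care, since the first-moment bound gives only logarithmic information on nonemptiness.
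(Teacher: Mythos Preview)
Your plan is a direct, hands-on multifractal analysis: compute tail estimates for the block sizes via the excursion/local-time dictionary, get upper bounds by first-moment/Borel--Cantelli covers, and build Frostman measures for the lower bounds. This is a reasonable route and the ingredients you list (exponential tails for $L_1(e)/(1-u)$, the squared-Bessel description of $Z$ near level~1, the asymptotic $1-U(t)\asymp t$) are correct. But the paper takes a genuinely different and much shorter path. It observes that the reduced tree of $B$ at level~1, after the time-change $u\mapsto 1-e^{-t}$, is exactly a Yule tree $\mathbb{T}$, and that Evans' space $(S,\eta)$ is isometric (up to the time-change $U$) to the boundary $\partial\mathbb{T}$ equipped with its normalized branching measure $\mu^\sharp$. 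Once this identification is made, Theorem~\ref{T:MF spectra} is read off from the multifractal spectrum of branching measures on Galton--Watson trees already computed by M\"orters and Shieh \cite{ms02,ms04}: one only has to compute the three parameters $a=\log m$, $r$, $\tau$ for the discretized Yule tree, and they all equal~1, giving the exponents $1-\theta$ and $2/\gamma-1$ immediately, including the endpoint nonemptiness statements. So the paper trades your Frostman construction and correlation estimates (which you correctly flag as the hard step) for a structural identification plus a citation; your approach would be more self-contained but substantially longer, and the endpoint cases $\theta=1$, $\gamma=2$ that you worry about come for free from \cite{ms02,ms04}.
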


It may be deduced from the above result that the Hausdorff dimension of $S$ is equal to 1 almost surely, a result which was first proved by Evans \cite{evans}.
This result should be compared to Theorem 5 in \cite{bbs2} which describes the multifractal spectrum for Beta$(2-\alpha,\alpha)$ coalescent, $\alpha \in (1,2)$. Kingman's coalescent is a limit case for this family and corresponds formally to the case $\alpha =2$, since the Beta$(2-\alpha, \alpha)$ distribution converges weakly to a Dirac mass at 0 when $\alpha \to 2$. Interestingly, only the ``thin points" side of the spectrum is obtained by taking a limit as $\alpha \to 2$ in that result: at the power-law scale, the ``thick points" part of the spectrum is empty, although the limit as $\alpha \to 2$ exists and is non-degenerate. For instance, here the smallest block turns out to be of order $t^2$ as $t \to 0$, while the largest block is of order $t \log(1/t)$. In the case of Beta-coalescents with parameter $1< \alpha<2$, these quantities are respectively $t^{\alpha/(\alpha -1)^2}$ and $t^{1/\alpha}$, which does not coincide with $t\log(1/t)$ when $\alpha \to 2^-$.

We emphasize that the proof of Theorem \ref{T:MF spectra} is in some sense purely conceptual: all the work consists of identifying Evans' metric space with a certain Galton-Watson tree (which here turns out to be the Yule tree), equipped with its branching measure. Theorem \ref{T:MF spectra} then follows automatically from the results of Peter M\"orters and Nan-Rueh Shieh \cite{ms02, ms04} on the multifractal spectrum of these measures. These results themselves rely on elegant percolative methods introduced by R. Lyons \cite{lyons}.

\section{Previous results and discussion}
\label{S:discussion}


We review some earlier results concerning the relation between Kingman's coalescent and Brownian processes. We start by discussing the ideas contained in \cite{aldous} and \cite{warren}, which are most directly related to the representation of Theorem \ref{T1}.

\subsection{Excursions conditioned on their local time}

The notion of continuum random tree (CRT) was developed by D. Aldous in his seminal papers \cite{aldous1, aldous3}, in which a careful treatment of the correspondence between excursions and continuum random trees is given. Particular attention is paid to the normalized Brownian excursion and to the tree that it encodes through this correspondence.
Early effort bore on the identification of the law of the tree spanned by a finite number of leaves chosen suitably at random. Given an excursion function $e \in \Omega^*$ and $p \in \N$, let $t_1,t_2, \ldots, t_p \in (0, \zeta)$ be pairwise distinct times, and define a planar labelled tree with edge-lengths $ T=T_p(e,t_1,\ldots , t_p)$ as follows.
\begin{itemize}
 \item[-] $T$ contains a root at height 0 and $p$ leaves, with the height of the $k\th$ leaf being $e(t_k).$
 \item[-] The path from the root to the leaves $j$ and $k$ ($1 \le j\neq k \le p$) splits at height $\inf\{e(s), s\in (t_{j}, t_{k}) \}.$
 \item[-] At each branch point, branches are labelled ``left" and ``right".
\end{itemize}
It is further assumed that all branch points are distinct (i.e., branch points are binary). Thus $T$ has $p-1$ branch points. Le Gall \cite{legall93} (see his Theorem 3) first identified the distribution of $T_p(e,t_1,\ldots , t_p)$ on the space of trees when $e$ is a normalized Brownian excursion and the $t_k$ are independent and uniform random variables on $[0,\zeta(e)]$ (and further gives the conditional distribution of $e$ conditionally given the tree $T$). Later on, Aldous \cite{aldouscond} gave a decomposition result for this distribution by identifying the conditional distribution of $T$ given the local time profile $(\ell_x, x \ge 0)$ where $\ell_x$ is the total local time accumulated at $x$ by $e$. This conditional distribution is constructed from a certain inhomogeneous (i.e., time-dependent) coalescent process whose main ingredient is Kingman's coalescent. More precisely, this process is defined as follows. First, the height of the leaves $(x_1, \ldots, x_p)$ are i.i.d. samples from $\ell_x\ dx$ (which is a probability distribution because $\zeta(e)=1$ almost surely). Then, thinking of time as running from  $x= \infty$ down to $x=0$ and the heights $x_i$ as being the birth times of particles, the law of $T$ is such that as $x$ decreases, the particles that are present merge pairwise, each independently at rate $4/\ell_x$. In other words, the conditional distribution of $T$ given $\ell$ can be thought of as \emph{an inhomogeneous coalescent with immigration rate $\ell_x$ and coalescing rate $4/\ell_x.$}  This result may be regarded as the continuum random tree counterpart to Perkins' disintegration theorem in the context of superprocesses \cite{perkins91}. The proof of Aldous is based on a discrete approximation to the continuum random tree, but soon afterwards Warren \cite{warren} gave two alternative, direct proofs of Aldous's result. The tools used in those arguments are closely related to earlier work by Le Gall \cite{legall93} and Warren and Yor \cite{warrenyor}.

It thus comes as no surprise that one can embed Kingman's coalescent into a Brownian excursion. However, we emphasize that our construction is rather different in that the tree which we consider is spanned by vertices at distance 1 from the root, rather than by leaves selected uniformly at random. Moreover, it seems difficult to use the description of $T$ given above to deduce results about Kingman's coalescent. On the other hand, as the reader will see, here these conclusions will follow in a straightforward fashion once Theorem \ref{T1} is proved. Finally, we believe that the computations leading up to Theorem \ref{T1} are new, and hope that the reader will find them interesting in themselves.

\subsection{Analogies and differences with Beta-coalescents}

The present paper complements earlier results of \cite{bbs1, bbs2}, which focuses on small-time properties of Beta -coalescents with parameter $\alpha \in (1,2).$ Beta-coalescents are a family of $\Lambda$-coalescent (i.e., coalescents with multiple collisions), where the measure $\Lambda$ is the density of a Beta$(2-\alpha,\alpha)$ random variable. (The interested reader may consult \cite{bbs2} and references therein.) Kingman's coalescent constitutes a formal limit case for the Beta-coalescents as $\alpha \to 2$ since the Beta$(2-\alpha, \alpha)$ converges weakly to the Dirac mass in zero as $\alpha \to 2$. It was proved in \cite{bbs2} that for all $1<\alpha<2$, Beta-coalescents can be embedded in continuum stable random trees associated with $\alpha$-stable branching processes rather than Brownian motion. (An excellent introduction to continuous random trees can be found in \cite{duleg}). When $\alpha=2$ the $\alpha$-stable branching process is the Feller diffusion, which is closely related to Brownian motion. It is therefore natural to suspect a relation between Kingman's coalescent and Brownian motion.

However, we emphasize that there is an essential difference between Theorem \ref{T1} here and Theorem 1 in \cite{bbs2}.
The analogue of Theorem 1 in \cite{bbs2} is the following. Let $(B_s, s \ge 0)$ be a reflecting Brownian motion and let $\tau_1 = \inf \{ t >0 : L(t,0) >1\}$, where $L(t,x)$ is the joint local time process of $B$. Let $v>0$ be such that $v< \sup_{s\le  \tau_1} B(s)$, and for all $0 \le u \le v$, define a partition $\Pi_u^v$ in exactly the same way as in the construction given above Theorem \ref{T1}, except that where we used the level 1 we now use level $v$: thus, here the excursions $(\eps_i)_{i=1}^\infty$ which we consider are those above level $v$ (instead of 1) and the $e_k$ are the excursions above $u$ that reach $v$.
Therefore, using this notation, the partition $\Pi_u$ defined for Theorem \ref{T1} is simply $\Pi_u^1$ (with, however, the difference that here $B$ is not a single excursion).

Now, for all $t>0$, let
$$
V(t) := \inf \left\{ s>0 : \int_0^s 4Z_u^{-1} du > t  \right\}
$$
where $Z_x= L(\tau_1, x)$ for all $x\ge 0$. (Note that $V(t) < \sup_{s \le \tau_1} B(s)$). Fix $T>0$ and for $0 \le t \le T$, consider the partition
$$
\hat \Pi_t := \Pi_{V(T-t)}^{V(T)}.
$$
We can prove the following result:
\begin{prop} \label{P1}
$(\hat \Pi_s, 0 \le s \le T)$ has the same law as $(\kappa_s, 0 \le s \le T)$, Kingman's coalescent run for time $T.$
\end{prop}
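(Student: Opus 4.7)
The strategy is to reduce Proposition~\ref{P1} to Theorem~\ref{T1} through It\^o's excursion decomposition of the reflecting Brownian motion above level $0$. By It\^o's theory, $(B_s,0\le s\le\tau_1)$ decomposes into a Poisson point process of super-excursions above $0$ on $[0,1]\times\Omega^*$ with intensity $d\ell\otimes\nu$, where $\ell$ is local time at $0$ and $\nu$ is It\^o's excursion measure. Since $v:=V(T)$ is a functional of the local-time profile $(Z_x)_{x\ge 0}$, one may condition on it; the super-excursions reaching level $v$ then form a conditionally finite family $e^{(1)},\dots,e^{(M)}$ with $M$ Poisson of parameter $\nu(\sup\ge v)$ and the $e^{(j)}$ conditionally i.i.d.\ of law $\nu(\cdot\mid\sup\ge v)$. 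Only these reaching super-excursions contribute to $\hat\Pi$, and mergers can occur only between $\varepsilon_i$'s sitting in the same super-excursion (distinct super-excursions being separated by $B=0$).

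Next, I would apply Theorem~\ref{T1} in its rescaled form (reference level $v$ in place of $1$) separately to each $e^{(j)}$. This produces, inside $e^{(j)}$, a Kingman coalescent on the subfamily of $\varepsilon_i$'s contained in $e^{(j)}$, parametrized by the internal clock $\int_u^v 4/L^{(j)}(x)\,dx$, where $L^{(j)}(x):=L^{e^{(j)}}(\zeta^{(j)},x)$ is the local time at $x$ of the super-excursion $e^{(j)}$ alone. By conditional independence of the super-excursions, these internal Kingman coalescents are independent.

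The main step is to verify that the superposition of these independent internal processes, re-expressed in the single global clock $t=\int_u^v 4/Z_x\,dx$ of the proposition, agrees in law with one Kingman coalescent on $\mathbb{N}$ run for time $T$. Since $Z_x=\sum_{j'}L^{(j')}(x)$ sums over \emph{all} super-excursions, including those that do not reach $v$, the global clock runs strictly slower than any internal clock. The reconciliation uses (i) Ray-Knight's second theorem, which identifies $(Z_x)_{x\ge 0}$ as a squared Bessel process of dimension $0$ started from $1$, and (ii) conditional independence given $(Z_x)_{x\ge 0}$ of the reaching and non-reaching super-excursions. One shows that the time shift induced by the non-reaching super-excursions is precisely what is needed so that the block count $M$ at time $T$ matches in law Kingman's $K(T)$, while the exchangeability of the $\varepsilon_i$-labels yields the paintbox structure of blocks.

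The principal obstacle is this last identification: showing that the apparent rate discrepancy between the internal rate $4/L^{(j)}(u)$ and the global rate $4/Z_u$ (per pair) is exactly absorbed by the Poisson randomness of the non-reaching super-excursions. Technically this reduces to an infinitesimal generator computation carried out along the lines of the proof of Theorem~\ref{T1} itself, with the squared Bessel Ray-Knight profile replacing the local-time profile of the conditioned excursion, and with the Donnelly-Kurtz lookdown construction used to identify the joint law of the super-excursion count and internal partitions with that of Kingman's block count and paintbox partition at time $T$.
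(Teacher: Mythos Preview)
Your decomposition into super-excursions does not reduce the problem to Theorem~\ref{T1} in any useful way, and the gap you yourself flag in the final paragraph is fatal rather than technical. If $\varepsilon_i,\varepsilon_j$ lie in the same super-excursion $e^{(k)}$, Theorem~\ref{T1} (rescaled) tells you they merge at rate $4/L^{(k)}(u)\,du$ in the level variable; converted to the global clock $dt=(4/Z_u)\,du$ this becomes rate $Z_u/L^{(k)}(u)>1$ per pair, which is random and depends on $k$ and $u$. Conditionally on the local-time profiles, the superposition is therefore \emph{not} a Kingman coalescent in the global clock, and no amount of ``absorbing'' by non-reaching excursions fixes this: those excursions contribute to $Z_u$ but do not affect the merger times of $\varepsilon_i,\varepsilon_j$ at all. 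The only way out is to abandon the conditioning on individual local-time profiles and compute the merger rate conditionally on the coarser filtration $\mathcal K_u$ (everything below level $u$)---but that is precisely the direct infinitesimal computation the paper performs, and Theorem~\ref{T1} plays no role in it. A secondary issue: $v=V(T)$ is a functional of the \emph{entire} profile $(Z_x)_{x\ge 0}$, hence of all super-excursions jointly, so conditioning on it destroys the i.i.d.\ structure you invoke in the first paragraph.

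For comparison, the paper's proof does not pass through Theorem~\ref{T1} at all. It works in the \emph{upward} direction (splitting rather than coalescing), using the Donnelly--Kurtz lookdown labelling by height rank: one computes directly, under the law $Q^{(m)}$ of an excursion conditioned on its maximum, the probability that between levels $t$ and $t+\delta$ the $k$th highest excursion splits off a piece whose height lands between $M_{j+1}(t)$ and $M_j(t)$. Gambler's-ruin estimates give this probability as $\delta(1/M_{j+1}-1/M_j)+o(\delta)$, and an explicit computation of $\E(1/M_j)-\E(1/M_{j+1})$ using the Poisson structure of excursion heights (with intensity $Z_t\,dh/2h^2$) yields $2/Z_t$, independently of $j$---a cancellation of the $j$-dependent factorials that the paper calls ``remarkable''. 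Doubling for left/right symmetry gives the lookdown rate $4/Z_t$, and Proposition~\ref{P1} follows from the lookdown characterisation of Kingman's coalescent.
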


Observe that the main difference between Proposition \ref{P1} and Theorem 1 is that here the tree is spanned by vertices at a random distance $V(T)$ from the root of the continuum random tree, while these vertices are at a deterministic distance 1 in Theorem \ref{T1}. This was the source of considerable technical difficulties in \cite{bbs2}. As the reader will see, the proof of Theorem \ref{T:MF spectra} is much more straightforward than that of Theorem 5 in \cite{bbs2}.

A direct proof of Proposition \ref{P1} is provided in Section \ref{S:proofs}. However, it could also be deduced from Perkins' disintegration theorem together with Le Gall's excursion representation of Dawson-Watanabe processes. For this, one must use the fact that the empirical measure for an exchangeable system of particles fully determines the law of the individual trajectories.

\medskip In light of Theorem \ref{T1} and the above discussion, one can wonder if, conversely, Theorem \ref{T1} could not be extended to Beta-coalescents. Interestingly, we have rather strong evidence that this cannot be true. Assuming that the construction was valid for a Beta-coalescent, we would conclude that the number of blocks lost at every collision would be an i.i.d. sequence. Indeed, from results in \cite{duleg}, one can see that the tree spanned by vertices at a deterministic level (the ``reduced tree") forms a time-change of a Galton-Watson process. However, in that case the distribution of the number of blocks involved at every collision must agree with the distribution in equation (10) of \cite{bbs1}. Simulations indicate that this is not the case for the final collision. However, we are not aware of a rigorous proof that this cannot be true, although there are also some theoretical arguments in that direction.
We note that the essential reason for which the proof of Theorem \ref{T1} breaks down in the case $\alpha <2$ is that Brownian motion possesses an extra symmetry property (reflection about a horizontal line) compared to the height process of other stable continuous-state branching processes.

\subsection{More on duality.}

Theorem \ref{T1} brings to mind another result which relates Kingman's coalescent to the behaviour of Yule processes. (As the reader will see in section \ref{S:preuve du thm}, Yule processes are indeed embedded in the construction of Theorem \ref{T1}). Consider the jump chain of Kingman's coalescent i.e., the chain $X(1), \ldots,
X(n), \ldots$ where $X(n)$ is the element of the $n\th$ simplex given by the block frequencies of Kingman's coalescent when it has $n$ blocks. In \cite{bg}, Bertoin and Goldschmidt show that this chain $X$ can be related to the fragmentation chain obtained by taking a Yule process $(Y_t, t \ge 0)$ and conditioning on $\{W=w>0\}$, where $W=\lim_{t\to\infty} e^{-t} Y_t$ almost surely. This fragmentation process $(G_t, t \ge 0)$ is defined by considering how the mass $w$ is being split between the children of an individual when it dies. More precisely, the two children of this individual each give rise to their own independent Yule process $Y^{(1)}$ and $Y^{(2)}$, say. To each one we can associate the corresponding random variable $W^{(1)}$ and $W^{(2)}$. Note that if $\tau$ is the time of the split, we must have $W= e^{-\tau}W^{(1)} + e^{-\tau} W^{(2)}$. Define the fragmentation process $(G_t, t\ge 0)$ by saying that at time $\tau$, the mass $W$ splits into $e^{-\tau} W^{(1)}$ and $e^{-\tau} W^{(2)}$, and so on as time evolves. If we do not condition on $\{W=w\}$, then $G$ is a fragmentation in the sense that fragments evolve independently with individuals fragments behaving as a rescaled copy of the original process. This is what is usually called a \emph{homogeneous fragmentation process} (see, e.g., \cite[Section 3.1]{bertoin}) but note that here the total mass is random. It is then natural to ask what happens when we condition on $\{W=w\}$ to keep the total mass of this fragmentation deterministic. Theorem 3.1 of \cite{bg} then states that if $(N_t,t\ge 0)$ is an independent Poisson process, then conditionally on $\{W=w\}$,
\begin{equation}\label{bg}
\{G_{\log(1+t)},t\ge 0\} \overset{d}= \{wX(N_{wt}),t\ge 0 \}.
\end{equation}
That is, up to a time-change, the fragmentation $G$ is the time-reversal of the jump chain of Kingman's coalescent. It is an open problem to decide whether a similar representation exists for the case of Beta-coalescents. We indicate that Christina Goldschmidt \cite{cg} has recently computed explicitly the Martin boundary of the continuous Galton-Watson process $(Y_t, t \ge 0)$ associated with the reduced tree at level 1 of a stable continuum random tree with index $\alpha$. This can be used to describe explicitly the behaviour of $Y^{(w)}$, where $Y^{(w)}$ denotes the process $Y$ conditioned on $\{W=w\}$, (here again, $W= \lim_{t\to \infty} e^{-\lambda t} Y_t$ and $\lambda = 1/(\alpha -1)$).

In particular, from her result it follows that the transition rates of $Y^{(w)}$ are not independent of the current state of $Y^{(w)}$. Since it turns out that $W=Z_1$ almost surely, ($Z_1$ being the quantity of local time of the stable CRT at level 1), it follows that conditionally on $\{Z_1=z\}$, the number of children of individuals in the tree is not i.i.d. In particular, the above objection against the extension of Theorem \ref{T1} does not hold here.



\section{Brownian construction of Kingman's coalescent.}

\label{S:proofs}

\subsection{Proof of Theorem \ref{T1}.}

\begin{proof}
The proof we give is based on a few simple calculations with excursion theory. For basic definitions and facts, we refer the reader to Chapter XII of \cite{revuz-yor}.

Fix $0<t<1$, and recall our notation $Z_x= L(\zeta, x)$ for all $x \ge 0$, where $L(t,x)$ is the joint local time of a Brownian excursion $(B_s, s \ge 0)$ conditioned to exceed level 1.  Define a filtration
\begin{equation}\label{G}
\{\cG_u = \sigma(Z_s, u \le s \le 1)\}_{0 \le u \le 1}.
\end{equation}
In parallel, define a family of $\sigma$-algebras $\{\cH_u\}_{0\le u \le 1}$ by putting
\begin{equation}
\label{H}
\cH_u= \sigma\left(B_{\alpha^u(s)}, s \ge 0\right)
\end{equation}
where for all $s>0$:
$$
\alpha^u(s):= \inf\left\{t>0: \int_0^t \indic{B_\tau>u} d\tau >s\right\}.
$$
In words $\cH_u$ contains all the information about the trajectory above level $u.$
It is a tedious but easy exercise to see that $\{\cH_{1-t}\}_{0\le t \le 1}$ is a filtration and $\cH_u \supseteq \cG_u$ for all $0 \le u \le 1$. Let $\delta >0$, and let $u=1-t$. Define the event $A_{k,j}(\delta)$ as follows. Call $N$ the number of excursions above $u$ which reach 1, and let $(e_1,\ldots e_N)$ be these excursions with an order given by their height (as usual, we view each $e_i$ as an element of $\Omega^*$). Let $1\le k,j \le N$ and let
$$
A_{k,j}(\delta):= \{\text{excursions $e_k$ and $e_j$ have coalesced at level $u-\delta$}\},
$$
where we say that $e_k$ and $e_j$ have coalesced at level $u-\delta$ if $e_k$ and $e_j$ are part of a single excursion above level $u-\delta$ which reaches level 1.

To alleviate notations we also write $A(\delta)$ when there is no risk of confusion.
Theorem \ref{T1} follows from the following claim:
\begin{equation}\label{claim1}
\PP( A_{k,j}(\delta) | \cG_u ; N \ge \max(j,k)) = \frac4{Z_u} \delta + o(\delta) , \ \ a.s.
\end{equation}
Note that to every excursion $e_i$ with $1\le i \le n$, we may associate a starting time $t_i \in [0, \zeta]$ such that the excursion starts at time $t_i$, that is: $B(t+t_i) = u+e_i(t)$,
for all $t \le \zeta(e_i)$. Assume without loss of generality that $t_j < t_k$. Among all excursions \emph{below} level $u$, consider the subset of those excursions $(e'_i, i \ge 1)$ which \emph{separate} $e_k$ and $e_j$, that is, whose starting time $t'_i$ lies strictly in the interval $(t_j , t_k)$ (see Figure \ref{fig1}).
\begin{figure}
\includegraphics[scale=.75]{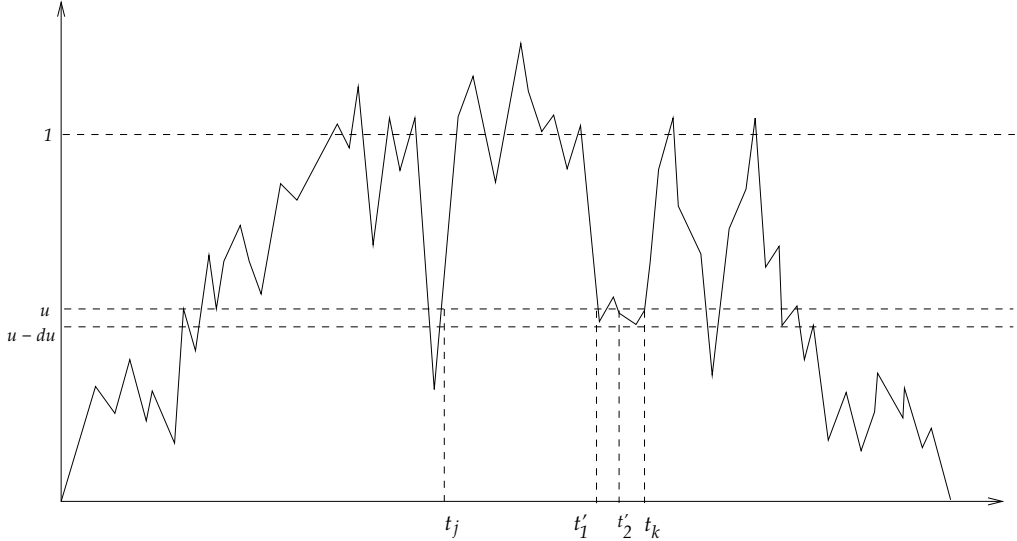}
\caption{Between levels $u$ and $u-du$, excursions $e_j$ and $e_k$ (which start respectively at times $t_j$ and $t_k$) coalesce. For this to occur, the excursions below $u$ starting at time $t'_1$ and $t'_2$ must not reach below $u-du$.}
\label{fig1}\end{figure}
Observe that for $e_j$ and $e_k$ to coalesce by level $u-\delta$, it is both necessary and sufficient that:
\begin{equation}
\inf_{s>0} e'_i(s) > -\delta, \ \ \text{for all $i\ge 1$}.
\end{equation}
Since $L(t,u)$ may only increase at times $t$ such that $B_t=u$, it follows that throughout $e_j$, the local time at level $u$ is constant, let $\ell_j$ be this quantity (thus, $\ell_j = L(t_j,u)$). Similarly, let $\ell_k$ be the local time at level $u$ throughout excursion $e_k$, and let $\ell= |\ell_j - \ell_k|$. Then we claim that, conditionally on $\ell_j$ and $\ell_k$, $A(\delta)$ has probability $\exp(-\ell/(2\delta))$. Indeed, by excursion theory and elementary properties of Brownian motion, we have that, conditionally on $\cH_u$,
$$
\sum_{i \ge 1} \delta_{(\ell'_i, e'_i)}
$$
is a Poisson point process on $[\ell_j, \ell_k] \times \Omega^*$ with intensity $\indic{[\ell_j, \ell_k]}d\ell \otimes \nu^u(de)$, where $d\ell$ is the one-dimensional Lebesgue measure, and $\nu^u$ is It\^o's excursion measure restricted to negative excursions $e \in \Omega^*$ such that $\inf_{s>0} e(s) >-u$. It follows that
$$\sum_{i\ge 1} \delta_{\left(\ell'_i, \inf_{s>0}e'_i (s)\right) }$$
is a Poisson point process on $[\ell_i,\ell_j] \times \mathbb{R}_-$ with intensity $d\ell \indic{\ell \in [\ell_i, \ell_j]} \otimes \indic{- u \le h \le 0} dh/2h^2 $, where $z:= Z_u$ (see, e.g., (2.10) in Chapter XII of \cite{revuz-yor}).
Thus,
$$
\PP(A(\delta)| \cH_u) = \exp(- |\ell_j - \ell_k|/(2\delta)).
$$
Now, observe that, conditionally on $\cG_u$,
$$\sum_{i\ge 1} \delta_{\left(\ell_i, \sup_{s>0}e_i (s)\right) }$$
is a Poisson point process on $[0,z] \times \mathbb{R}_+$ with intensity $d\ell\indic{\ell \le z} \otimes \indic{h\ge 1-u} dh/2h^2$, because $e_i$ are precisely the set of excursions above $u$ that reach level 1. Therefore, using elementary properties of Poisson processes, conditionally on $\cG_u$, and conditionally on $\{N \ge \max(j,k)\}$ (which we must assume if we are to talk about the coalescence of $e_j$ and $e_k$), then $\ell_j$ and $\ell_k$ are two independent random variables uniformly distributed on $[0,z]$ (this does not depend on the precise intensity of the point process; all that is required is that the time-intensity is the Lebesgue measure and the space-intensity has no atom). In particular, conditionally on $\cG_u$ and $N \ge \max(j,k)$, $\ell:= |\ell_j - \ell_k|$ may be written as: $$\ell=z|U-V|$$
where $U,V$ are uniform random variables on $(0,1)$ and $z=Z_u$. Putting these pieces together, we obtain:
\begin{align*}
\PP(A(\delta) | \cG_u; N \ge \max(j,k)) & = \mathbb{E}[\exp(- z|U-V|/(2\delta))], \ \ a.s. \\
&= \int_0^1 \int_0^1 dx dy \exp\left(-\frac{z|x-y|}{2\delta}\right) \ \ a.s. \\
&= 2\int_0^1 dy \int_0^ydx \exp\left( -\frac{z(y-x)}{2\delta}\right) \ \ a.s. \\
&= 2\int_0^1 dy \exp\left(-\frac{zy}{2\delta}\right) \int_0^y dx \exp\left(\frac{zx}{2\delta}\right)\ \ a.s. \\
&= 2\int_0^1 dy \exp\left(-\frac{zy}{2\delta}\right) \frac{2\delta}z (\exp\left(\frac{zy}{2\delta}\right) -1) \ \ a.s.\\
&= \frac{4\delta}z \int_0^1 dy [1- \exp\left(-\frac{zy}{2\delta}\right)] \ \ a.s. \\
&= \frac{4\delta}z - \frac{8\delta^2}{z^2}(1-\exp\left(-\frac{z}{2\delta}\right))\ \ a.s. \\
& = \frac{4\delta}z +o(\delta) \ \ a.s.
\end{align*}
as $\delta \to 0$. This is precisely (\ref{claim1}), and so this implies Theorem \ref{T1}.
\end{proof}

\subsection{Proof of Proposition \ref{P1}}

The proof of Proposition \ref{P1} involves similar ideas to the proof of Theorem \ref{T1} above. However, the calculations become, somewhat surprisingly, slightly more complicated. In particular, there is a remarkable cancellation towards the end, which illustrates the following fact. Roughly speaking, we try to compute the rate at which the $k\th$ highest excursion at a current level $u>0$ splits to give the $k\th$ and $j\th$ highest excursions, with $j\ge k$. We are trying to show that this rate is $4/Z_u$ and is in particular independent of $k$ and $j$. This may seem hard to believe at first: when splitting, it is easier to create an
excursion of small size rather than large. However, the excursion heights accumulate near zero and therefore creating an interval whose size falls exactly between the sizes of the $n$th smallest and $n+1$th smallest intervals also becomes harder when $n$ tends to infinity.
These two effects exactly compensate each other and imply the aforementioned result!

\medskip Fix $k,j\ge 1$ and assume that $k\le j$. Recall that here $(B_s, s \le \tau_1)$ is no longer a single Brownian excursion but a collection of excursions which accumulate one unit of local time at level 0. We shall still denote by $Z_t=L(\tau_1,t)$ the
total local time accumulated at level $t$ by $(B_s,0\le s \le \tau_1)$. Recall
that, by the Ray-Knight theorem (Theorem (2.3) in Chapter XI of \cite{revuz-yor}), $(Z_t, t\ge 0)$ is the Feller diffusion, defined by:
$$
\begin{cases}
dZ_t = \sqrt{Z_t}dW_t\\
Z_0=1
\end{cases}
$$
where $(W_t, t\ge 0)$ is a standard Brownian motion.
Let
\begin{equation}\label{K}
\cK_u= \sigma\left(B_{\gamma^u(s)}, s \ge 0\right)
\end{equation}
where for all $s>0$:
$$
\gamma^u(s):= \inf\left\{t>0: \int_0^t \indic{B_\tau\le u} d\tau >s\right\}.
$$
In words $\cK_u$ contains all the information about the trajectory of $B$ below level $u.$
Just as in the proof of Theorem \ref{T1}, $\{\cK_{t}\}_{t>0}$ is a filtration. We wish to
compute the rate at which the $j\th$ excursion ``looks down'' on the
$k\th$ excursion to adopt its label. We claim that, independently
of $k$ and $j$, and independently of $\cH_t$ (defined in (\ref{H}))
the infinitesimal rate at which this happens is:
\begin{equation}
\label{claim2} \text{rate ($j$ looks down on $k$)} = \frac4{Z_t}.
\end{equation}
Here and in what follows, the event that $j$ looks down on $k$ means the following thing. Define a process $\{\xi_i(t), t \ge 0\}_{i\ge 1}$ as follows. Initially, $\{\xi_i(0)\}_{i\ge 1}$ are i.i.d uniform $(0,1)$ random variables. We think of $\xi_i(0)$ as the label of the $i\th$ highest excursion of $(B_s, 0 \le s \le \tau_1)$ above 0. As time evolves, $\xi_i(t)$ keeps this interpretation, with $\xi_i(t)$ being the label of the $i\th$ highest excursion above level $t$ of $(B_s, 0\le s \le \tau_1)$. The rule of evolution is that when the level $t$ increases, an excursion may split into two parts. The highest of these two parts is necessarily still ranked $i\th$, while the second part has a larger rank $j>i$. In that case, we say that \emph{$j$ looks down on $i$}. When this occurs, $\xi_j(t^+)$ becomes $\xi_i(t^-)$, while for $k \ge j$, what was previously the $k\th$ excursion now becomes the $(k+1)\th$ excursion: thus in that case, we define $\xi_{k+1}(t)= \xi_k(t^-)$. Visually, $j$ adopts the label of $i$ (it looks down on it) and all labels corresponding to excursions with a higher rank are pushed up by 1. Thus (\ref{claim2}) tells us that, in the terminology of Donnelly and Kurtz, $\{\xi_i(t)\}_{i\ge 1}$ is a lookdown process with infinitesimal rate $4/Z_t$ at time $t$, conditionally on $(Z_t, t \ge 0)$. We do not need the full definition of these processes, which is somewhat complicated, and can be found, for instance, in Chapter V of \cite{etheridge}. Thus (\ref{claim2}) is exactly the Brownian analogue of Theorem 14 in \cite{bbs2}, where further details on this approach can be found. In particular, Proposition \ref{P1} follows directly from (\ref{claim2}) and Lemma 5.6 in \cite{etheridge}.

We give a direct proof of (\ref{claim2}) based on calculations with
It\^o's excursion measure. Fix a small $\delta>0$. For all $t\ge 0$, let $(e_{t,i})_{i\ge 1}$ be the excursions of $B$ above level $t$ ordered by their height (recall that $e_{t,i} \in \Omega^*$ for all $t\ge 0$, $i \ge 1$). For $k\ge 1$, let
$$
M_k(t) = \sup_{s>0} e_{t,k}(s).
$$
A simple analysis of the construction shows that in time $\delta$
(that is, between times $s$ and $t:=s+\delta$), the $j\th$ excursion
looks down on the $k\th$ excursion if there is a local minimum
located within the $k\th$ highest excursion at level $s\le m \le
t=s+\delta$ such that of the two branches going out of this local
minimum, one has a height equal to $M_k(t^-)=M_k(t)$ and the other
has a height $h_\text{new}$ such that
\begin{equation}
\label{cond} M_j(t^-) <h_\text{new}<M_{j+1}(t^-).
\end{equation}
Let $(e_k(x), 0\le k \le \zeta)$ be this excursion. Given $\{M_k(t)=m\}$, $e_k$ has the law of an It\^o excursion conditioned on
the event:
$
\{\sup_{x\in (0,\zeta)} e_k(x) =m\}.
$
For $m>0$, let $Q^{(m)}$ denote the probability measure on
$C([0,\infty))$ defined by the conditioning of $\nu$ given that
$\{\sup_{s>0} e(s) = m\}$.
Define:
\begin{align*}
T^1_\delta&= \inf\{x>0: e_k(x)>\delta\},\\
T_M &=\text{ the unique $x\ge 0$ such that } e_k(x)=M_k(t)=m,\\ 
T^2_\delta&=\inf\{x>T_M:e_k(x)<\delta\}.
\end{align*}
We note that (\ref{cond}) occurs if and only if the event
$A_{M_{j+1},M_j} \cup B_{M_{j+1},M_j}$ occurs, where
$$
A_{x,y}:=\left\{\exists T^1_\delta<s<T_M: x <  \sup_{r\in [T^1_\delta, s]} e_k(r)<y \text{ and }
e_k(s)<\delta\right\}
$$
and
$$
B_{x,y}:=\left\{x < \sup_{r\in [T^2_\delta, \zeta]} e_k(r)<y\right\}.
$$
For future reference, we also let $A=A_{M_{j+1},M_j}$ and $B= B_{M_{j+1},M_j}$.
\begin{figure}
\input{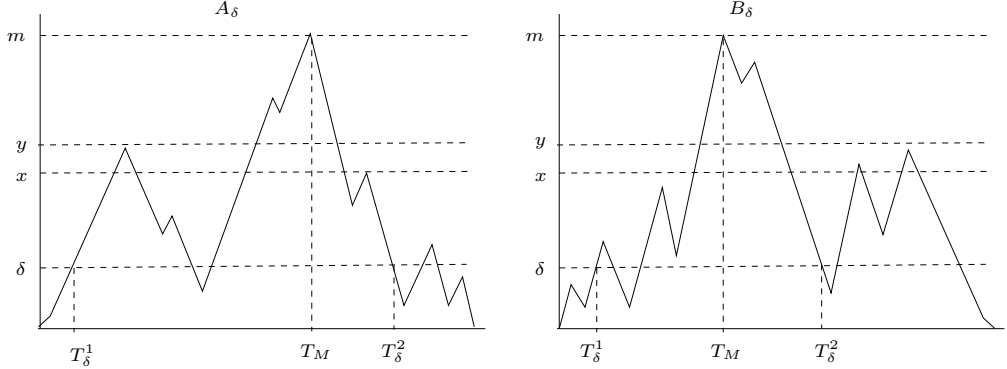}
\caption{Events $A_{x,y}$ on the left and $B_{x,y}$ on the right}
\end{figure}

We claim that
\begin{equation}\label{Ecomp}
\PP(A|\cK_t)= \PP(B | \cK_t)= \frac{2\delta}{Z_t} +o(\delta) , \ \  a.s.
\end{equation}
and that $\PP(A\cap B| \cK_t)=O(\delta^2), \ a.s.$, so that, up to terms of order
$o(\delta)$, $\PP(A\cup B|\cK_t )= 4\delta/Z_t , \ a.s.$,
as required in (\ref{claim2}).
By time-reversal symmetry of the Brownian excursion, it is clear that $\PP(A|\cK_t)= \PP(B | \cK_t)$ so we only do the calculations in the case $B.$

\medskip The key idea to compute $\PP(B)$ is to use the Markov property at time $T^2_\delta$. (In fact an additional step is required,
since under this conditioning $e_k$ is not exactly Markov). Let
$m>0$ and let $0<x<y<m$ be two fixed levels (we think of $y=M_j$
and $x=M_{j+1}$).
We claim that $\PP(B)$ can be computed as follows:
\begin{equation}\label{P(B)1}
Q^{(m)}(B_{x,y})=\frac{\delta}x\left(1-\frac{x}y\right) +o(\delta)
\end{equation}
Indeed, consider the unconditional problem first. Recall that the part of the trajectory of a Brownian excursion that comes after $T^1_\delta$ is simply a Brownian motion (started from $\delta$) run until it hits zero.
Let $\eta>0$ and let $M$ be the event that, starting
from $\delta$, it reaches level $m$
and from there returns to
zero without reaching further than $m+\eta$. Starting from $\delta,$ the probability of reaching $m$ before hitting $0$ is $\phi_\delta(m)= \delta/m$ by the standard gambler's ruin probability. Once there, the probability of returning to 0 before hitting $m+\eta$ is $\eta/(m+\eta).$
Thus
\begin{equation}
\label{Ecomp2} \PP^\delta(M)= \phi_\delta(m)\eta/m +o(\eta)
\end{equation}

On the other hand, the event that, started from $\delta$, both $M$
and $B_{x,y}$ occur can be expressed, by using the strong Markov
property at time $T^2_\delta$, as:
\begin{equation}
\label{Ecomp3} \PP^\delta(M\cap B_{x,y})= \phi_\delta(m)
\frac{\eta}{m-\delta}\frac{\delta}x\left(1-\frac{x}y\right) +o(\eta)
\end{equation}
In the above expression, the second term expresses the fact that
the process, having reached distance $m$, only has to return to
distance $\delta$ before reaching $m+\eta$. The third term corresponds
to hitting $x$ before returning to zero, while the fourth and last
term corresponds to the probability that, having reached level
$x$, the process $e$ reaches 0 before $y$. All these calculations
involve the same gambler's ruin probability already used above.
Combining (\ref{Ecomp2}) with (\ref{Ecomp3}), we obtain as
claimed, up to terms of leading order $\delta$:
\begin{equation}
\label{Ecomp4} Q^{(m)}(B_{x,y}) =
\frac{\delta}x\left(1-\frac{x}y\right) +o(\delta).
\end{equation}
Here the term in $o(\delta)$ depends only on $m=M_{k}(s)$. Rewriting
(\ref{Ecomp4}), we find:
\begin{equation}
\label{P(B)2}
Q^{(m)}(B_{x,y})=\delta\left(\frac1x-\frac1y\right)+o(\delta).
\end{equation}
Taking the expectation:
\begin{eqnarray}
Q^{(m)}(B)&=&\delta
E\left(\frac1{M_j(t)}-\frac1{M_{j+1}(t)}\right) +o(\delta) \nonumber \\
&=& \delta\large[\E(1/M_j(s))-\E(1/M_{j+1}(s))\large] +o(\delta).\label{P(B)3}
\end{eqnarray}
Now, observe that by the Markov property of the underlying
continuous random tree, we have above level $s$ a Poisson point
process of excursions with intensity It\^o's excursion measure $n$,
run for a local time $z=Z_s$. Thus, the number $N_{>a}$ of
excursions higher than some fixed level $a>0$ is distributed as:
$$
N_{>a}=\text{Poisson} (z/2a)
$$
since $n(\sup e >a) = 1/2a$. Therefore,
\begin{eqnarray*}
\PP(M_j(s)>a)&=& \PP(N_{>a}\ge j)\\
&=& 1-e^{-z/2a}-\ldots -e^{-z/2a}(z/2a)^{j-1}/(j-1)!
\end{eqnarray*}
from which it follows that $M_j$ has a density equal to
$$
\PP(M_j\in(a,a+da))=\frac{e^{-z/2a} \left(z/2a\right)^j}{a(j-1)!}
$$

This implies:
\begin{eqnarray*}
\E(1/M_j)-\E(1/M_{j+1})&=& \int_0^{\infty} \frac1a e^{-z/2a} \left[(z/2a)^j/a(j-1)! -(z/2a)^{j+1}/a(j)!  \right] da\\
&=&\frac1{j!}\int_0^\infty \frac1a e^{-z/2a} (z/2a)^j \left[ j/a - z/2a^2 \right] da\\
\end{eqnarray*}
using the change of variable
$u=z/2a$, we obtain:
\begin{eqnarray*}
\E(1/M_j)-\E(1/M_{j+1})&=& \frac{1}{j!} \left( -\int_0^\infty e^{-u}u^{j} \frac{2u}{z} \frac{2ju}{z} \frac{z}{2u^2} du +  \int_0^\infty e^{-u}u^j\frac{2u}{z} du \right) \\
&=& \frac{2}{zj!} \int_0^\infty e^{-u}u^{j+1}
du - \frac{2}{z(j-1)!} \int_0^\infty e^{-u}u^j du\\
&=& \frac2z\left( \frac{(j+1)!}{j!} - \frac{j!}{(j-1)!}\right)\\
&=& \frac2z.
\end{eqnarray*}
The cancellation of terms involving $j$ is quite remarkable.
Since $z=Z_t$, we have thus proved the claim
(\ref{Ecomp}):
$$
\PP(B)=2/Z_t.
$$
As explained before, this proves the claim (\ref{claim2}).


\section{Proof of Corollary \ref{C:typ size}}

Let $(B_s, 0 \le s \le \zeta)$ be a Brownian excursion conditioned to reach 1, and let $(\kappa_t,t \ge 0)$ be the realization of Kingman's coalescent described in Theorem \ref{T1}. Note that by Theorem \ref{T1}, one can obtain the frequencies of the blocks of $\kappa_t$ by simply considering the excursions of $B$ above level $U(t)$ with a mass proportional to the quantity of local time that they accumulate at level 1. Thus, to prove Corollary \ref{C:typ size}, the first step consists of computing the number of excursions above a given level $u<1$ which carry a given amount of local time at 1. For $0<u <1$, and $x \ge 0$, let $N(u,x)$ be the number of excursions above level $1-u$ that carry a local time at 1 greater than $x$.
\begin{lemma} \label{L:exc} We have the almost sure convergence:
\begin{equation}\label{cd0}
\frac{2u}{Z_1} N(u,2ux) \longrightarrow e^{-x}
\end{equation}
for all $x\ge 0$ as $u \to 0$.
\end{lemma}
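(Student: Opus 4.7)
The plan is to recognize $N(u,2ux)$ as (conditionally) a Poisson variable with explicit mean $\frac{Z_{1-u}}{2u}e^{-x}$, and then invoke a law of large numbers for Poisson variables with diverging parameter. The first ingredient is the Poissonian structure above level $1-u$: by excursion theory applied exactly as in the proof of Theorem~\ref{T1}, conditionally on $Z_{1-u}$ the excursions of $B$ above level $1-u$ form a Poisson point process on $[0,Z_{1-u}]\times \Omega^*$ with intensity $\indic{0\le \ell\le Z_{1-u}}\,d\ell\otimes \nu(de)$, where $\nu$ is It\^o's excursion measure. The conditioning of $B$ on reaching level $1$ only forces the sub-process of excursions with $\sup e \ge u$ to be non-empty, an event of conditional probability tending to $1$ as $u\to 0$, so it is asymptotically transparent.

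Next I compute the relevant It\^o-measure of ``large local time at height $u$''. For each excursion $e\in\Omega^*$, write $\tilde L^e(u)$ for its local time at height $u$. Recall $\nu(\sup e\ge u)=1/(2u)$, and under $\nu(\cdot\mid \sup e\ge u)$ the strong Markov property at $\sigma_u:=\inf\{t:e(t)=u\}$ shows that $(e_{\sigma_u+t})_{t\ge 0}$ is a standard Brownian motion starting at $u$ and killed upon hitting $0$. By the usual PPP description of the excursions of this BM away from $u$, the total local time accumulated at $u$ before killing is exponential with rate equal to the rate of downward excursions reaching $-u$, namely $1/(2u)$. Consequently
$$\nu\bigl(\tilde L^e(u)>2ux\bigr)\;=\;\nu(\sup e\ge u)\cdot e^{-x}\;=\;\frac{1}{2u}\,e^{-x}.$$
Thinning the Poisson point process from the previous step by the event $\{\tilde L^e(u)>2ux\}$ gives: conditionally on $Z_{1-u}$,
$$N(u,2ux)\ \sim\ \poi(\mu_u),\qquad \mu_u\,:=\,\frac{Z_{1-u}}{2u}\,e^{-x}.$$

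Finally, by the joint continuity of local time, $Z_{1-u}\to Z_1>0$ a.s., so $\mu_u\to\infty$ a.s. Along the subsequence $u_n:=1/n$, Chernoff's bound for Poisson variables combined with the Borel--Cantelli lemma (applied on $\{Z_1>\delta\}$ and then letting $\delta\downarrow 0$) yields $N(u_n,2u_n x)/\mu_{u_n}\to 1$ almost surely --- first at rational $x$, then at every $x\ge 0$ by monotonicity of $y\mapsto N(u,y)$ and continuity of $e^{-x}$. To transfer this to the continuous parameter $u\to 0$, one uses that $N(\cdot,y)$ is non-decreasing in $u$ and $N(u,\cdot)$ is non-increasing in $y$: for $u_{n+1}\le u\le u_n$ we have the sandwich
$$\frac{2u_{n+1}}{Z_1}\,N(u_{n+1},2u_n x)\ \le\ \frac{2u}{Z_1}\,N(u,2ux)\ \le\ \frac{2u_n}{Z_1}\,N(u_n,2u_{n+1}x),$$
and both bounding sequences still converge a.s.\ to $e^{-x}$ because $u_{n+1}/u_n=n/(n+1)\to 1$ keeps the exponential factors $e^{-u_{n+1}x/u_n}$ and $e^{-u_n x/u_{n+1}}$ at the correct limit. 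This last sandwich is the main obstacle in the proof: the random conditional mean $\mu_u$ varies with $u$ and Poisson concentration naturally lives on a discrete grid, which is why one must select $u_n=1/n$ (slow decay) rather than a geometric subsequence, so that the interpolation does not distort the exponential factor.
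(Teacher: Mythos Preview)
Your identification of the conditional Poisson law $N(u,2ux)\sim\poi\bigl(\frac{Z_{1-u}}{2u}e^{-x}\bigr)$ and the Chernoff/Borel--Cantelli argument along a subsequence are essentially the same as in the paper. The genuine gap is in your interpolation step: the claim that $u\mapsto N(u,y)$ is monotone is \emph{false} for $y>0$, so your sandwich does not hold.

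To see why, recall that $N(u,y)$ counts excursions above level $1-u$ carrying local time at level $1$ exceeding $y$. When $u$ increases through a coalescence event, two excursions with local-time masses $a,b$ merge into one of mass $a+b$. If $a,b>y$ then $N(\cdot,y)$ \emph{decreases} by $1$; if $a,b\le y$ but $a+b>y$ then $N(\cdot,y)$ \emph{increases} by $1$. Thus $u\mapsto N(u,y)$ genuinely jumps in both directions, and the inequalities
\[
N(u_{n+1},2u_nx)\;\le\;N(u,2ux)\;\le\;N(u_n,2u_{n+1}x)
\]
cannot be deduced from monotonicity. The paper handles this by a different device: it bounds $|N(u,y)-N(u_k,y)|$ by the \emph{total} number of coalescences in $(u_{k+1},u_k]$, namely $|N(u_{k+1})-N(u_k)|$ (which \emph{is} monotone since $y=0$), and then shows this difference is $o(1/u_k)$ using the already-established subsequential limit. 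This is also why the paper chooses $u_k=1/k^2$ rather than $1/k$: it needs summability in Borel--Cantelli without conditioning on $\{Z_1>\delta\}$, and then the ratio $|u_{k+1}-u_k|/u_k\to 0$ controls the second error term.

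Your conditioning on $\{Z_1>\delta\}$ to make $u_n=1/n$ work at the Borel--Cantelli stage is a legitimate variant, but you still need a correct argument for passing from the grid to continuous $u$.
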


\begin{proof}
To see this, observe that by excursion theory, $N(u):= N(u,0)$ is a Poisson random variable with mean $Z_{1-u}/(2u)$. Moreover, conditionally on $N(u,0)= n$, the $n$ excursions that reach level 1 are i.i.d. realizations of the It\^o measure conditioned to exceed level $u$. Now, it is well-known that such an excursion, upon reaching $u$, behaves afterwards as Brownian motion killed when returning to 0,
so by standard properties of Brownian motion local time (see Proposition 4.6 chapter VI in \cite{revuz-yor}) we have that the amount of local time accumulated by such an excursion at level $u$ is exponentially distributed with mean $2u$.
Therefore, by Poisson thinning, conditionally on $Z_{1-u}=z$,
\begin{equation}\label{poisson}
N(u,x) \overset{d}= \text{Poisson}\left(\frac{z}{2u} e^{-x/(2u)}\right).
\end{equation}
Considering excursions with local time greater than $2ux$, let
$$X_u= \frac{2u e^x}{Z_{1-u}} N(u,2ux).$$
Note that, by almost sure continuity of $Z_s$ near $s=1$, it suffices to prove that $X_u \longrightarrow 1$ almost surely as $u \to 0$.
Next we recall the following standard Chernoff bound for a Poisson random variable $Y$ with parameter $\mu$: for all $h \ge 0$,
$$
\PP(Y>\mu ( 1+\delta)) \le e^{-h\mu ( 1+\delta)} \E ( e^{hY}) = \exp(\mu(e^{h} -1 -h) - h\mu \delta).
$$
Since $e^{h} - 1 - h\sim h^2 /2$ for $h \to 0$, we see that $-\lambda = e^{h} - 1 -h - h \delta <0$ for sufficiently small $h$. Thus, using (\ref{poisson}), we are led to the estimate:
$$
\PP(|X_u-1|> \delta|Z_{1-u}=z) \le C e^{-\lambda z /2u}
$$
for some $\lambda >0$ and some $C>0$.
Taking the expectation in the above, we get
$$
\PP(|X_u-1|> \delta) \le \E(Ce^{-\lambda Z_{1-u} /2u}).
$$
For $k\ge 1$, let $u_k=1/k^2$. Note that for $0\le s \le 1$, $Z_{s}$ dominates stochastically an exponential random variable with mean $2s$. Indeed, $Z_s$ is greater than the local time accumulated by $B$ at level $s$ after $T_1$, the hitting time of 1 by $B$. Since $(B_{T_1+t}, t \ge 0)$ has the distribution of a Brownian motion started at 1 killed upon hitting zero, we may
apply again Proposition 4.6 in Chapter VI of \cite{revuz-yor}, the claim follows. We deduce that
\begin{align*}
\sum_{k=2}^\infty \E(Ce^{-k^2\lambda Z_{1-u_k}/2}) & \le C\sum_{k=2}^\infty \int_0^\infty \frac{e^{- \lambda k^2 x} e^{ - x/(2-2/k^2) }}{ 2-2/k^2} dx\\
& \le C \sum_{k=2}^\infty \frac1{\lambda k^2(2-2/k^2) +1} <\infty.
\end{align*}
Thus by the Borel-Cantelli lemma, we get that
\begin{equation}\label{cd}
X_{u_k} \to 1 \text{ almost surely as }k \to \infty.
\end{equation}
Now, to obtain almost sure convergence for other values of $u$, let $k$ be such that $u_{k+1} < u \le u_k$, and consider the process $t \mapsto N(t,2ux), t \in [u_{k+1}, u_k].$ Note that, for a given size $y\ge 0$ say, the difference $|N(u,y)- N(u_k,y)|$ is bounded by
the total number of excursions that coalesce during the interval $(u_{k+1},u_k]$. To see this observe that $u \mapsto N(u,y)$ evolves by jumps of size 1 (either two excursions that had masses smaller than $y$ coalesce to give birth to an excursion of mass at least $y$ and the jump is positive, or  two excursions of masses larger than $y$ coalesce, in which case $N(u,y)$ decreases by 1). Hence for each coalescence event in $(u_{k+1},u_k]$ the process $N(u,y)$ changes by at most 1.

Thus we have that for all $u \in [u_{k+1}, u_k]$:
$$
|N(u,2ux)-N(u_k,2ux)|  \le |N(u_{k+1})-N(u_k)|.
$$
Since $N(u,x)$ is monotone in $x$, we obtain that for every $u \in (u_{k+1},u_k]$,
\begin{align*}
|N(u,2ux)- N(u_k,2u_kx)| &\le |N(u_{k+1})- N(u_k)| + |N(u_k,2ux)-N(u_k,2u_kx)| \\
& \le |N(u_{k+1})- N(u_k)| + |N(u_k,2u_{k+1}x)-N(u_k,2u_kx)| \\
& \le 2|N(u_{k+1})- N(u_k)| + |N(u_{k+1},2u_{k+1}x)-N(u_k,2u_kx)| \\
& = \Delta_k,
\end{align*}
say, where we have set
\begin{equation}\label{delta}
\Delta_k:=2|N(u_{k+1})- N(u_k)| + |N(u_{k+1},2u_{k+1}x)-N(u_k,2u_kx)|.
\end{equation}
Multiplying by $u$ and letting $X'_u=uN(u,2ux)$ (so that (\ref{cd0}) is equivalent to $X'_{u} \to Z_1e^{-x}/2$ almost surely as $u \to 0$ for all $x \ge 0$), we get:
\begin{align}
\nonumber \sup_{u \in (u_{k+1}, u_k]}|X'_u - X'_{u_k}| &= \sup_{u \in (u_{k+1}, u_k]} |u(N(u,2ux) - N(u_k,2u_kx)) \\ &\qquad \qquad \qquad+(u-u_k)N(u_k,2u_kx)| \nonumber \\
&\le  \sup_{u \in (u_{k+1}, u_k]} \{u \Delta_k + |u-u_k|N(u_k,2u_kx) \}\nonumber \\
& \le u_k \Delta_k + \frac{|u_{k+1}-u_k|}{u_k}X'_{u_k}. \label{cd1}
\end{align}
It is plain that the second term on the right-hand side converges almost surely to 0. To see that $u_k\Delta_k \to 0$ as well, it suffices to observe that by 
(\ref{cd}) applied respectively with $x=0$ and $x>0$, we see that $N(u_k) \sim C_1 k^2$ while $N(u_k, 2u_k x) \sim C_1 k^2$ for some random $C_1, C_2>0$. It follows that both terms in the right-hand side of (\ref{delta}) are $o(k^2)$, i.e., $u_k \Delta_k \to 0$ almost surely.
Thus the left-hand side of (\ref{cd1}) converges to 0 as well, and this implies $X'_u \to Z_1 e^{-x}/2$ almost surely as $u \to 0$. This finishes the proof of the lemma.
\end{proof}

We trivially obtain from Lemma \ref{L:exc} a result first derived by D. Aldous in \cite{aldous} (see his equation (35)). Let $K(t,x)$ be the number of blocks in Kingman's coalescent at time $t$ that are greater than $x$.

\begin{lemma}
For every $x \ge 0$, we have the almost sure convergence as $t \to 0$:
\begin{equation}\label{ald-est}
\frac{t}2K(t,tx/2) \longrightarrow e^{-x}.
\end{equation}
\end{lemma}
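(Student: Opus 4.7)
\bigskip
\noindent\textbf{Proof plan for Lemma on $K(t,x)$.}
The plan is to reduce everything to Lemma \ref{L:exc} via the Brownian construction given by Theorem \ref{T1}. Writing $\kappa_t = \Pi_{U(t)}$ and setting $u := 1-U(t)$, each block of $\kappa_t$ corresponds bijectively to one of the excursions $e_k$ of $B$ above level $U(t)=1-u$ that reach level $1$. By excursion theory (applied above level $1$), the asymptotic frequency of the block associated with $e_k$ is $\ell_k/Z_1$, where $\ell_k$ denotes the local time at level $1$ accumulated by $e_k$ (the family $\{\eps_i\}$ forms a Poisson point process in the local time at $1$, so a law-of-large-numbers argument gives the frequency $\ell_k/Z_1$; the ordering of the $\eps_i$'s by height is irrelevant per Remark 1). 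Consequently
\begin{equation*}
K(t,y) \;=\; N\bigl(u,\, y\, Z_1\bigr), \qquad u = 1-U(t).
\end{equation*}

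Next I would quantify how $u$ depends on $t$. From the definition of $U$ we have the identity $\int_{U(t)}^{1} 4/Z_s\,ds = t$, so by almost sure continuity of $s\mapsto Z_s$ at $s=1$ (and the fact $Z_1>0$ a.s.),
\begin{equation*}
\frac{4u}{t\, Z_1} \;=\; \frac{4(1-U(t))}{t\, Z_1} \;\longrightarrow\; 1 \quad \text{a.s. as } t\to 0.
\end{equation*}
In particular, setting $y = tx/2$, we have $yZ_1 = txZ_1/2 = 2ux\cdot\beta(t)$ with $\beta(t)\to 1$ a.s.

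The final step is to sandwich. Apply Lemma \ref{L:exc} with $x$ replaced by $x(1\pm\eps)$ for $\eps$ running over a countable sequence tending to $0$; on a single a.s.\ event this gives
\begin{equation*}
\frac{2u}{Z_1}\, N\bigl(u,\, 2ux(1\pm\eps)\bigr) \;\longrightarrow\; e^{-x(1\pm\eps)} \quad \text{as } u\to 0.
\end{equation*}
Since $N(u,\cdot)$ is monotone non-increasing in its second argument and $\beta(t)\in(1-\eps,1+\eps)$ eventually, we get
\begin{equation*}
N\bigl(u,\,2ux(1+\eps)\bigr) \;\le\; K(t,tx/2) \;=\; N(u,\,yZ_1) \;\le\; N\bigl(u,\,2ux(1-\eps)\bigr).
\end{equation*}
Multiplying through by $t/2$ and using that $(t/2)\cdot Z_1/(2u) = (tZ_1)/(4u)\to 1$ a.s., Lemma \ref{L:exc} yields
\begin{equation*}
e^{-x(1+\eps)} \;\le\; \liminf_{t\to 0}\,\tfrac{t}{2}\,K(t,tx/2) \;\le\; \limsup_{t\to 0}\,\tfrac{t}{2}\,K(t,tx/2) \;\le\; e^{-x(1-\eps)} \quad \text{a.s.}
\end{equation*}
Letting $\eps\downarrow 0$ along a countable sequence concludes the proof.

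There is no serious obstacle: the only delicate point is making the deterministic-vs-random rescaling rigorous, i.e.\ passing from $N(u,\,yZ_1)$ (where the argument $yZ_1$ is random and multiplied by $t$) to $N(u,\,2ux)$ (where the argument is deterministic in $u$). The monotonicity of $N$ in its mass variable handles this cleanly, and the continuity of $Z_s$ at $s=1$ controls the time change $U$.
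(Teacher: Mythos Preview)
Your proof is correct and follows the same approach as the paper: identify $K(t,tx/2)=N(u,\,txZ_1/2)$ via Theorem~\ref{T1}, use the asymptotic $u=1-U(t)\sim tZ_1/4$ coming from the definition of $U$, and then apply Lemma~\ref{L:exc}. The paper's proof compresses your sandwich step into the single phrase ``making the necessary cancellations''; your monotonicity argument with $x(1\pm\eps)$ is exactly what is needed to make that phrase rigorous.
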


\begin{proof}
This is a trivial consequence of Theorem \ref{T1} and Lemma \ref{L:exc}. Indeed, since every block at time $t$ corresponds to an excursion above level $u$ with $u=U(t)$ by Theorem 1, and since the mass of a block is given by the renormalized amount of local time it accumulates at level 1, one may write:
$$
K(t,tx/2)= N(u,y)
$$
where $u = 1-U(t) \sim tZ_1 /4$ as $t \to 0$, and $y =tx Z_1/2$. Therefore, using the almost sure convergence result in Lemma \ref{L:exc} and making the necessary cancellations, we obtain the desired estimate (\ref{ald-est}).
\end{proof}

Corollary \ref{C:typ size} follows directly from (\ref{ald-est}), since if $B(t)$ denotes the mass of a block randomly chosen among the $K(t,0)$ blocks present at time $t$ (uniformly at random), then $\PP(B(t) > tx/2) = \E( K(t,tx/2)/K(t,0)) \longrightarrow e^{-x}$ by the Lebesgue dominated convergence theorem. It follows that:
$$
\frac{2B(t)}{t} \overset{d}\longrightarrow E
$$
where $E$ is an exponentially distributed random variable with mean 1.
Now, observe that the distribution of the frequency $F(t)$ of the block containing 1 is nothing but a size-biased version of the law of $B(t)$, that is, for every nonnegative Borel function $f$, we have (denoting $B=B(t)$ and $F=F(t)$):
$$
E\{f(F)\}= E\{Bf(B)\}.
$$
By considering for instance for every $a>0$, $\PP(F\le a) = \E(B\indic{B\le a})$, and the Lebesgue convergence theorem, we conclude that as $t \to 0$,
$$\frac{F}{2t} \overset{d}\longrightarrow \hat E$$
where $\hat E$ has a size-biased exponential distribution. That is, $\hat E$ has the distribution $xe^{-x}dx$ which is a Gamma(2) distribution. Equivalently, $\hat E$ is the sum of two standard exponential random variables $E+E'$. This concludes the proof of Corollary \ref{C:typ size}.

\section{Proof of Theorem \ref{T:MF spectra}}\label{S:preuve du thm}

The main ingredient for the proof of Theorem \ref{T:MF spectra} is the definition of \emph{reduced tree} associated with our
Brownian excursion $(B_s, 0 \le s \le \zeta)$, and results about the multifractal
spectrum of the Branching measure of Galton Watson trees due to
M\"orters and Shieh\cite{ms02}. Formally, the reduced tree $\mathbb{T}$ can be described by saying that $(B_s, 0 \le s \le \zeta)$ encodes a continuum random tree $\mathcal{T}$ with a metric $d$ and a root $o$ as in \cite{aldous1}. Each vertex $z \in \mathcal{T}$ has a unique geodesic $\gamma_z:[0,1] \to \mathcal{T}$ that connects it to $o$, such that if $d(o,z)=\rho$, then $d(o, \gamma_z(t)) = \rho t$ for all $0\le t \le 1$. We define $\mathbf{T}$ by
$$
\mathbf{T}= \bigcup_{z \in \mathcal{T}: d(z,0)=1} \gamma_z([0,1]).
$$
Informally, $\mathbf{T}$ is a continuum random tree obtained from $\mathcal{T}$ by taking away (``pruning") every vertex whose descendence does not reach distance 1 from the root. Equivalently, this is the tree which, at level $0\le u<1$, has as many branches as there are excursions above level $u$ that reach level 1. Thus if we let $|\mathbf{T}(u)|$ be the number of branches of $\mathbb{T}$ at level $0\le u<1$, then we have by definition $|\mathbf{T}(u)|= N(u)$, the number of excursions above $u$ reaching level 1.

The process $|\mathbf{T}(u)|$ is a variant of a process already considered by Neveu and
Pitman in a seminal paper \cite{neveu pitman}. The key observation is that:
\begin{equation}\label{yule1}
\{|\mathbf{T}(1-e^{-t})|, t \ge 0\} \overset{d}= \{Y_{t}, t \ge 0\},
\end{equation}
where $\{Y_t , t \ge 0\}$ is a (rate 1) Yule process. This is a continuous-time Galton-Watson process where individuals die at rate 1 to give birth to exactly two offsprings. In fact, an even stronger property holds. Consider the random tree $\mathbb{T}$ obtained from $\mathbf{T}$ by applying the same exponential time-change as in (\ref{yule1}). That is, for all $z \in \mathcal{T}$ with $d(z,o)=1$, define $\gamma'_z(t)=\gamma_z(1-e^{-t})$. Let
\begin{equation}
\label{yule2}
\mathbb{T}:= \bigcup_{z \in \mathcal{T}: d(z,0)=1} \gamma'_z([0,\infty)).
\end{equation}
Then $\mathbb{T}$ is a Yule tree. We can thus define the boundary $\partial \mathbb{T}$ of the tree $\mathbb{T}$ by taking $\partial \mathbb{T}$ to be the set of \emph{rays}, i.e., the set of all non-backtracking $\mathbb{T}$-valued paths $(\zeta(t), t \ge 0)$ such that $\zeta(t)$ is at distance $t$ from the root. The boundary $\partial \mathbb{T}$ is naturally equipped with a measure $\mu$, called the \emph{branching measure}, which is defined as follows. Thanks to \emph{Kesten-Stigum} theorem:
\begin{equation}\label{ks}
e^{-t} |\mathbb{T}(t)| \overset{d}\longrightarrow W
\end{equation}
where $W>0$ almost surely. In the Yule case, $W$ is well known to be an exponential random variable with mean 1.
 We next define a metric $\delta$ on $\bd$ by declaring that for $\zeta, \zeta' \in \bd$, $\delta(\zeta, \zeta') = e^{-t}$ where $t$ is the time of the ``most recent common ancestor", that is, $t= \sup\{s \ge 0 : \zeta(s)= \zeta'(s)\}.$ ($\delta$ is the so-called genealogical metric on $\bd$). A ball $B$ of radius $t$ for $\delta$ consists of all rays which pass through a given vertex $z \in \mathbb{T}$, at distance $t$ from the root. The subtree containing $z$ is itself a Yule tree and hence the Kesten-Stigum theorem applies to it, let $W(z)$ be the associated Kesten-Stigum random variable as in (\ref{ks}). Then define
 $$
\mu(B)= e^{-t} W(z).
 $$
It is easy to see that $\mu$ satisfies the assumptions of Carath\'eodory's extension theorem (see, e.g., \cite{durrett}, p. 444) and hence defines a finite measure on $\bd$, with total mass $\mu(\bd) = W(o)$. It will also be convenient to define the probability measure $ \mu^\sharp (\cdot) = \mu (\cdot)/ \mu(\bd)$.

Then our key claim is that we can identify $(\bd, \mu^\sharp)$ with Evans' metric space $(S, \eta)$:
\begin{equation}\label{id}
(\bd, \mu^\sharp) \rightleftharpoons (S, \eta),
\end{equation}
in the sense that we can find a continuous one-to-one map $\Phi: S \to \bd$ such that if $x \in S$ and $\zeta= \Phi(x)$,
\begin{equation}\label{id2}
\eta\{B(x,t)\} = \mu^\sharp \{ B(\zeta, 1-U(t))\},
\end{equation}
where $U(t)$ is the time-change appearing in Theorem \ref{T1}. To do this, we first define $\Phi$ on the integers. For $i \in \N$, simply define $\Phi(i)$ to be the ray associated with excursion $\eps_i$, where $\eps_1, \eps_2, \ldots$ are the excursions of $B$ above level 1, ordered by their respective heights. The definition of $\Phi$ is easily extended to $x \in S$ by taking a suitable sequence $x_n \in \N$ with $x_n \to x$ (see the proof of Theorem 5 in \cite{bbs2} for further details).

Having made the identification (\ref{id2}), it turns out that Theorem \ref{T:MF spectra} is now an easy application of Theorem 1.2 in \cite{ms02} and Theorem 1 in \cite{ms04}. These results are derived for a discrete-time  Galton-Watson tree $T$, and state the following. Assume that $T$ is a discrete-time Galton-Watson tree and that the number of offsprings $L $ of an individual is such that for some $r>0$, $\E(\exp(rL)) = \infty$ but $\E(\exp(tL)) < \infty$ if $t < r$, and $L$ is unbounded. Then we have, by Theorem 1.2 in \cite{ms02} (corresponding to the thick part of the spectrum), for all $0\le \theta\le a/r$:
\begin{equation}\label{spec1}
\dim\left\{ \xi \in \bd: \limsup_{n \to \infty}\frac{\mu(B(\xi, e^{-n}))}{m^{-n} n} = \theta\right\} =a- r\theta,
\end{equation}
where $a= \log \E(L)$ and $m= \E(L).$ Moreover when $\theta= a/r$ the above set is non-empty (see Lemma 3.3 (ii) in \cite{ms02}). Assuming further that $\PP(L= 1)>0 $ (the \emph{Schr\"oder case}) and letting $\tau= -\log \PP(L=1) /a$ we have, by Theorem 1 in \cite{ms04}, (corresponding to the thin part of the spectrum), for all $a\le \theta \le a(1+1/\tau)$:
\begin{equation}\label{spec2}
\dim\left\{ \xi \in \bd: \limsup_{n \to \infty}\frac{-\log \mu(B(\xi, e^{-n}))}{ n} = \theta\right\} = a(\frac{a}\theta(1+\tau)-\tau).
\end{equation}
Moreover when $\theta= a(1+1/\tau)$, the above set is non-empty almost surely.
To use these results in our case, let $T$ be the discrete-time tree obtained by sampling $\mathbb{T}$ at discrete times $1,2,\ldots$, so $T$ is a discrete-time Galton-Watson process which belongs to the Schr\"oder class. The random variable $W=W(o)$ is of course unchanged so it is an exponential random variable with mean 1 (note that $W$ is only equal to half the local time $Z_1$, corresponding to a ``one-sided" L\'evy approximation of $Z_1$ from below: see, e.g., (1.11) and (1.19) in Chapter VI of \cite{revuz-yor}). The distribution of $L$ is not particularly nice to write down but it is unbounded and we may nonetheless identify the parameters $a$, $r$ and $\tau$ as follows. Note first that since $W$ is exponential with parameter 1, $r=1$. Moreover, since
$$\{e^{-t} Y_t,t\ge 0\} \text{ is a martingale }$$
we obtain
$m=\E(Y_1)=\E(L)=e$, so $a=\log \E(L)=1$. We obtain $\tau$ by computing $\PP(L=1) = e^{-1}$ since every individual branches at rate 1, and hence $\tau =1$ as well. Using (\ref{id2}) together with (\ref{spec1}) and (\ref{spec2}), it is now straightforward to deduce Theorem \ref{T:MF spectra}. The details are left to the reader and are similar (in fact, much easier) than the proof of Lemma 26 in \cite{bbs2}. Note in particular that, since the construction of Kingman's coalescent in Theorem 1 holds at a fixed deterministic level 1, we avoid the use of Lemma 24 in \cite{bbs2}.

\section*{Acknowledgements}

We thank Christina Goldschmidt for sharing with us her thoughts about the Martin boundary of Galton-Watson trees, and we thank Ed Perkins for useful discussions. This work started when J.B. was at Universit\'e de Provence in Marseille, and N.B. was a postdoc at University of British Columbia. J.B. thanks the Math department of UBC for their invitation, during which this work started.

\end{document}